\newcommand{\RR}{\mathbb{R}}
\newcommand{\prox}{{\mathbf{prox}}}
\newcommand{\cY}{{\mathcal{Y}}}
\newcommand{\cO}{{\mathcal{O}}}
\newcommand{\cD}{{\mathcal{D}}}
\newcommand{\cS}{{\mathcal{S}}}
\newcommand{\cZ}{{\mathcal{Z}}}
\newtheorem{assumption}{Assumption}[section]
\newtheorem{lemma}{Lemma}[section]
\newtheorem{definition}{Definition}[section]
\newtheorem{proposition}{Proposition}[section]
\begin{document}

\title{Extragradient and Extrapolation Methods with Generalized Bregman Distances for Saddle Point Problems}

\author{Hui Zhang\thanks{
Department of Mathematics, National University of Defense Technology,
Changsha, Hunan 410073, China.  Email: \texttt{h.zhang1984@163.com}
}
}

\date{\today}

\maketitle

\begin{abstract}
In this work, we introduce two algorithmic frameworks, named Bregman extragradient method and Bregman extrapolation method, for solving saddle point problems. The proposed frameworks not only include the well-known extragradient and optimistic gradient methods as special cases, but also generate new variants such as sparse extragradient and extrapolation methods. With the help of the recent concept of relative Lipschitzness and some Bregman distance related tools, we are able to show certain upper bounds in terms of Bregman distances for gap-type measures. Further, we use those bounds to deduce the convergence rate of $\cO(1/k)$ for the Bregman extragradient and Bregman extrapolation methods applied to solving smooth convex-concave saddle point problems. Our theory recovers the main discovery made in [Mokhtari et al. (2020), SIAM J. Optim., 20, pp. 3230-3251] for more general algorithmic frameworks with weaker assumptions via a conceptually different approach.
\end{abstract}

\textbf{Keywords.} Extragradient method, extrapolation method, Bregman distance, iteration complexity, saddle point problem

\textbf{AMS subject classifications.} 90C25, 90C47


\section{Introduction}
The extragradient method is a powerful tool for solving smooth convex-concave saddle point problems. Its original scheme was introduced by Korpelevich as early as 1976 in \cite{1976An}. During the past four decades, this method has been extensively developed from several aspects such as extending its range \cite{2004Prox,2015On,2018A} and simplifying its iterate \cite{2011The}.  Recently, due to the fact that many game and learning problems are actually equivalent to finding saddle points of min-max optimization problems, it has attracted increasing attention in the machine learning, computer science, and optimization communities.

At the algorithmic level, many works aim to modify the original extragradient method. Remarkable examples include Popov's modification of the {A}rrow-{H}urwicz method \cite{1980A}, Tseng's modified forward-backward splitting method \cite{1998A}, Nemirovski's prox-method \cite{2004Prox}, Malitsky's reflected projected method \cite{2015Projected} and its generalization in \cite{2020A,2020simple}. It should be pointed out that Malitsky's reflected projected method covers the optimistic gradient descent-ascent (OGDA) method, which recently appeared in machine learning for training GANs (see \cite{2018training}).

To guarantee convergence, the standard assumptions are monotonicity and the Lipschitz continuity. However, they are sufficient but not necessary; see e.g. \cite{2015On,2020simple,2020efficient}. Very recently, Cohen et al. in \cite{2020Relative} proposed the relative Lipschitzness of operator as an alternative to the standard assumptions. The relative Lipschitzness of function in first-order methods was original introduced in \cite{Bauschke2016A,Lu2016relatively} to go beyond the gradient Lipschitz continuity.

In this paper, we first introduce two algorithmic frameworks, namely Bregman extragradient method and Bregman extrapolation method, to unify existing extragradient-type algorithms. Then, by employing the relative Lipschitzness of operator, we deduce certain upper bounds in terms of Bregman distances for gap-type measures of the proposed algorithmic frameworks.  Applying to smooth convex-concave saddle point problems, we show that these algorithms converge with a rate $\cO(1/k)$. This demonstrates that our theory extends the recent discovery in \cite{2020Convergence}, which was made by formulating the extragradient and OGDA methods as approximations of the proximal point algorithm \cite{1976mono}, to more general algorithmic frameworks with weaker assumptions via a conceptually different approach.

The rest of the paper is organized as follows.  In Section \ref{se2}, we introduce some basic convex analysis and  Bregman distance related tools. In Section \ref{se3}, we propose two algorithmic frameworks with generalized Bregman distances as well as some specialized variants. In Section \ref{se4}, we list a group of assumptions and establish two main iterate results for the proposed algorithmic frameworks.
In Section \ref{se5}, we derive the convergence rate $\cO(1/k)$ for the proposed algorithmic frameworks applied to solving smooth convex-concave saddle point problems.  Concluding remarks are given in Section
\ref{se6}. Missing proofs are postponed to Appendix.

\section{Preliminaries}\label{se2}

\subsection{Notation}
In this paper, we restrict our analysis into real finite dimensional spaces $\RR^d$. We use $\langle \cdot, \cdot\rangle$ to denote the inner product and  $\|\cdot\|$ to denote the Euclidean norm. For a multi-variables function $f(x,y)$, we use $\nabla_xf$ (respectively, $\nabla_yf$) to denote the gradient of $f$ with respective to $x$ (respectively, $y$).

\subsection{Convex analysis tools}
We present some basic notations and facts about convex analysis, which will be used in our results.
\begin{definition}\label{sc0}
A function $\phi:\RR^d\rightarrow \RR$ is convex if for any $\alpha\in [0,1]$ and $u, v\in \RR^d$, we have
\begin{equation*}
\phi(\alpha u+(1-\alpha)v)\leq \alpha \phi(u)+(1-\alpha)\phi(v);
\end{equation*}
and strongly convex with modulus $\mu> 0$ if for any $\alpha\in [0,1]$ and $u, v\in \RR^d$, we have
\begin{equation*}
\phi(\alpha u+(1-\alpha)v)\leq \alpha \phi(u)+(1-\alpha)\phi(v)-\frac{1}{2}\mu\alpha(1-\alpha)\|u-v\|^2. \label{SC1}
\end{equation*}
Further, $\phi$ is concave if $-\phi$ is convex.
\end{definition}

\begin{definition}
Let $\phi:\RR^d\rightarrow \RR$ be a convex function. The subdifferential of $\phi$  at $u\in \RR^d$ is defined as
$$\partial \phi (u):= \{ u^* \in \RR^d: \phi(v)\geq \phi(u)+ \langle u^*, v-u\rangle,\quad \forall v\in \RR^d \}.$$
The elements of $\partial \phi(u)$ are called the subgradients of $\phi$ at $u$.
\end{definition}

The subdifferential generalizes of the classical concept of differential because of the well-known fact that $\partial \phi(u)=\{\nabla \phi(u)\}$ when the function $\phi$ is differentiable.
In terms of the subdifferential, the strong convexity in Definition \ref{sc0} can be equivalently stated as \cite{Hiriart2004}: For any $u, v\in \RR^d$ and $v^*\in \partial\phi(v)$, we have
\begin{equation}\label{sc01}
 \phi(u)\geq \phi(v)+\langle v^*, u-v\rangle +\frac{\mu}{2}\|u-v\|^2.
\end{equation}

\begin{definition}
Let $\phi:\RR^d\rightarrow \RR$ be a convex function. The conjugate of $\phi$ is defined as
$$\phi^*(u^*)=\sup_{v\in \RR^d}\{\langle u^*,v \rangle -\phi(v)\}.$$
\end{definition}

\begin{definition}
A function $\phi:\RR^d\rightarrow \RR$ is gradient-Lipschitz-continuous with modulus $L>0$ if for any $u, v\in \RR^d$, we have
\begin{equation*}
\|\nabla \phi(u)-\nabla \phi(v)\|\leq L\|u-v\|. \label{Lip}
\end{equation*}
\end{definition}

The following facts are well-known, which could be found from the classic textbooks \cite{1970convex} and \cite{Hiriart2004}.

\begin{lemma}\label{scLip}
Let $\phi:\RR^d\rightarrow \RR$ be a strongly convex function with modulus $\mu> 0$. Then we have that
 \begin{itemize}
   \item its conjugate $\phi^*$ is gradient-Lipschitz-continuous with modulus $\frac{1}{\mu}$;
   \item the conditions $\phi(u)+\phi^*(u^*)=\langle u, u^*\rangle$, $u^*\in \partial \phi(u)$, and $u\in\partial \phi^*(u^*)$ are equivalent.
 \end{itemize}

\end{lemma}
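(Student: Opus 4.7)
The plan is to treat the two bullets separately: the first is a quantitative statement (Lipschitz constant $1/\mu$), while the second is the purely qualitative Fenchel--Young equivalence. Both are textbook facts, so the goal is to pin down which ingredient does the real work.

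For the first bullet, the main step is to prove that $\phi^*$ is everywhere finite and single-valued differentiable, after which the Lipschitz bound will fall out of strong monotonicity. Strong convexity with modulus $\mu$ makes $\phi$ coercive (indeed super-coercive), so for any $u^*\in\RR^d$ the map $v\mapsto \langle u^*,v\rangle-\phi(v)$ attains its supremum, and the unique maximizer $u$ (unique by strict convexity) satisfies $u^*\in\partial\phi(u)$. By a standard argument (essentially Danskin's theorem or the fact that the max of a strictly concave function is smooth in the parameter), $\phi^*$ is differentiable at $u^*$ with $\nabla\phi^*(u^*)=u$. Now given $u_i^*\in\RR^d$ and $u_i=\nabla\phi^*(u_i^*)$ for $i=1,2$, we have $u_i^*\in\partial\phi(u_i)$, so applying the subdifferential characterization \eqref{sc01} in both directions and adding gives the strong monotonicity inequality
\begin{equation*}
\langle u_1^*-u_2^*, u_1-u_2\rangle \geq \mu\|u_1-u_2\|^2.
\end{equation*}
Cauchy--Schwarz on the left then yields $\mu\|u_1-u_2\|\leq \|u_1^*-u_2^*\|$, which is exactly $\|\nabla\phi^*(u_1^*)-\nabla\phi^*(u_2^*)\|\leq \tfrac{1}{\mu}\|u_1^*-u_2^*\|$.

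For the second bullet, I would show the chain of equivalences $u^*\in\partial\phi(u)\;\Leftrightarrow\;\phi(u)+\phi^*(u^*)=\langle u,u^*\rangle\;\Leftrightarrow\;u\in\partial\phi^*(u^*)$. The first equivalence follows directly from the definitions: the Fenchel--Young inequality $\phi(u)+\phi^*(u^*)\geq \langle u,u^*\rangle$ holds always, and equality means $u$ maximizes $v\mapsto \langle u^*,v\rangle-\phi(v)$, which by the convex subdifferential optimality condition is exactly $u^*\in\partial\phi(u)$. For the second equivalence, I invoke biconjugacy $\phi^{**}=\phi$ (available because a real-valued convex function on $\RR^d$ is automatically lower semicontinuous and proper) and apply the first equivalence with the roles of $\phi$ and $\phi^*$ swapped, using the symmetry $\phi^{**}(u)+\phi^*(u^*)=\langle u,u^*\rangle \Leftrightarrow u\in\partial\phi^*(u^*)$.

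The main obstacle I anticipate is the differentiability claim for $\phi^*$: one needs to know that the supremum in the definition of $\phi^*(u^*)$ is attained at a unique point and that this attained point depends smoothly (in fact, continuously) on $u^*$. Once this is in hand, the Lipschitz estimate is a one-line consequence of strong monotonicity, and the Fenchel--Young equivalences are essentially bookkeeping. Since both items are completely standard and the excerpt cites \cite{1970convex,Hiriart2004} for them, I would keep the sketch short and lean heavily on those references for the existence/uniqueness of the maximizer and for biconjugacy.
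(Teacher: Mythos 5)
Your proposal is correct. Note, however, that the paper does not actually prove this lemma: it states both bullets as well-known facts and simply cites Rockafellar's \emph{Convex Analysis} and Hiriart-Urruty--Lemar\'echal, so there is no in-paper argument to compare against. Your sketch is the standard textbook derivation and all the steps check out: supercoercivity of a strongly convex $\phi$ gives attainment and finiteness of $\phi^*$, uniqueness of the maximizer gives a singleton subdifferential and hence differentiability of $\phi^*$, and adding the two instances of \eqref{sc01} gives strong monotonicity $\langle u_1^*-u_2^*,u_1-u_2\rangle\geq\mu\|u_1-u_2\|^2$, from which Cauchy--Schwarz yields the $\tfrac{1}{\mu}$-Lipschitz bound; the Fenchel--Young equivalences follow from optimality conditions plus biconjugacy, which is available since a finite convex function on $\RR^d$ is continuous, hence proper and lower semicontinuous.
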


\subsection{Bregman distance tools}
The Bregman distance, originally introduced in \cite{1967The}, is a very powerful concept in many fields where distances are involved. Recently, many variants of Bregman distances appears, see e.g. \cite{1997Legendre,1997Free,2018Re}. For simplicity as well as generality, we choose the Bregman distance defined by a strongly convex.
\begin{definition}
Let $\omega:\RR^d\rightarrow \RR$ be a strongly convex function with modulus $\mu> 0$. The Bregman distance $D_\omega^{v^*}(u,v)$ between $u, v\in \RR^d$ with respect to $\omega$ and a subgradient $v^*\in\partial \omega(v)$ is defined by
\begin{equation}\label{Breg}
D_\omega^{v^*}(u,v):=\omega(u)-\omega(v)-\langle v^*, u-v\rangle.
\end{equation}
\end{definition}
In the following, we state three basic facts about the Bregman distance, which will be used later in our analysis. It should be pointed out that the results are well-known--see e.g. \cite{1997Free,1997proximal}. We list them here, along with a brief proof, for completeness.
\begin{lemma}\label{lemBreg}
Let $\omega:\RR^d\rightarrow \RR$ be a strongly convex function with modulus $\mu> 0$. For any $u, p, q\in \RR^d$ and $p^*\in \partial \omega (p), q^*\in \partial \omega (q)$, we have that
\begin{equation}\label{Bregdis1}
D_\omega^{p^*}(u,p) - D_\omega^{q^*}(u,q) + D_\omega^{q^*}(p,q)=\langle q^*-p^*, u-p\rangle,
\end{equation}
\begin{equation}\label{Bregdis2}
D_\omega^{q^*}(p,q)=D_{{\omega}^*}^p(q^*,p^*),
\end{equation}
and
\begin{equation}\label{Bregdis3}
D_\omega^{q^*}(p,q)\geq \frac{\mu}{2}\|p-q\|^2.
\end{equation}
\end{lemma}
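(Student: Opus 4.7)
The plan is to dispatch the three identities in the order (\ref{Bregdis1}), (\ref{Bregdis3}), (\ref{Bregdis2}), since the first and third are essentially one-line unfoldings of the definition $D_\omega^{v^*}(u,v) = \omega(u) - \omega(v) - \langle v^*, u-v\rangle$, while (\ref{Bregdis2}) requires invoking the Fenchel conjugacy machinery packaged in Lemma \ref{scLip}.

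For the three-point identity (\ref{Bregdis1}), I would simply substitute the definition into each of the three Bregman distances on the left-hand side. The two occurrences of $\omega(u)$ cancel between $D_\omega^{p^*}(u,p)$ and $D_\omega^{q^*}(u,q)$, and the $\omega(p)$ and $\omega(q)$ terms telescope against the third distance $D_\omega^{q^*}(p,q)$, leaving only inner products. Collecting the remaining inner products and using $\langle q^*, u-q\rangle - \langle q^*, p-q\rangle = \langle q^*, u-p\rangle$ reduces the expression to $\langle q^* - p^*, u-p\rangle$ in a single line. For the lower bound (\ref{Bregdis3}), I apply the equivalent subgradient form of strong convexity stated in (\ref{sc01}) with $u=p$, $v=q$ and $v^*=q^*\in\partial\omega(q)$; rearranging the resulting inequality immediately yields $D_\omega^{q^*}(p,q)\geq \tfrac{\mu}{2}\|p-q\|^2$.

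The primal-dual symmetry (\ref{Bregdis2}) is the main obstacle, since four conjugacy relations must be tracked simultaneously. Starting from $D_\omega^{q^*}(p,q) = \omega(p) - \omega(q) - \langle q^*, p-q\rangle$, I would use Lemma \ref{scLip} to rewrite $\omega(q) = \langle q, q^*\rangle - \omega^*(q^*)$, which is valid because $q^*\in\partial\omega(q)$; after the $\langle q,q^*\rangle$ cancellation this collapses the expression to $\omega(p) + \omega^*(q^*) - \langle q^*, p\rangle$. Applying the same conjugacy identity at the other point, $\omega(p) = \langle p, p^*\rangle - \omega^*(p^*)$ with $p^*\in\partial\omega(p)$, and regrouping gives $\omega^*(q^*) - \omega^*(p^*) - \langle p, q^*-p^*\rangle$. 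Finally, invoking once more the equivalence in Lemma \ref{scLip}, the relation $p^*\in\partial\omega(p)$ is equivalent to $p\in\partial\omega^*(p^*)$, so the right-hand expression is precisely the Bregman distance $D_{\omega^*}^{p}(q^*,p^*)$ built from the conjugate $\omega^*$ with the subgradient $p$ taken at the base point $p^*$.

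The hard part is purely notational: keeping straight which variable plays the role of point and which plays the role of subgradient when we pass from the primal to the dual. Once the equivalence $p^*\in\partial\omega(p)\iff p\in\partial\omega^*(p^*)$ from Lemma \ref{scLip} is in hand, the algebraic computation for (\ref{Bregdis2}) is only a few symmetric substitutions, and the other two identities reduce to definitional manipulations.
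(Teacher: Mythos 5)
Your proposal is correct and follows essentially the same route as the paper: \eqref{Bregdis1} and \eqref{Bregdis3} by direct substitution of the definition and of \eqref{sc01}, and \eqref{Bregdis2} by the two Fenchel-equality substitutions from Lemma \ref{scLip} followed by the equivalence $p^*\in\partial\omega(p)\iff p\in\partial\omega^*(p^*)$ to identify the result as $D_{\omega^*}^p(q^*,p^*)$. No gaps.
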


\begin{proof}
The equality \eqref{Bregdis1} follows from \eqref{Breg}, and the inequality \eqref{Bregdis3} from \eqref{sc01} and \eqref{Breg}. To obtain \eqref{Bregdis2}, we derive that
\begin{eqnarray}
\begin{array}{lll}
D_\omega^{q^*}(p,q) &= & \omega(p)-\omega(q)-\langle q^*, p-q\rangle    \\
  &= &\langle p, p^*\rangle -\omega^*(p^*) -\langle q, q^*\rangle +\omega^*(q^*) -\langle q^*, p-q\rangle  \\
&= &\omega^*(q^*)-\omega^*(p^*) -\langle p, q^*-p^*\rangle\\
&=& D_{{\omega}^*}^p(q^*,p^*),
\end{array}
\end{eqnarray}
where the second and fourth lines follow by using the second part of Lemma \ref{scLip} and the condition $p^*\in \partial \omega (p), q^*\in \partial \omega (q)$.
\end{proof}

\section{Algorithmic frameworks}\label{se3}
In this section, we introduce two algorithmic frameworks, both of which are constructed by an operator $F:\RR^d\rightarrow \RR^d$ and a strongly convex function  $\omega:\RR^d\rightarrow \RR$ via certain coupled styles.

\subsection{Bregman extragradient method}
Let $u_0, u_0^*\in\RR^{d}$ and positive parameters $\{\alpha_k\}$ be given. The Bregman extragradient method, abbreviated as BEG, updates the iterates $\{u_k\}$ for $k\geq0$ via the following scheme.
\begin{eqnarray}\label{alg1}
\left\{\begin{array}{lll}
\bar{u}_k &= & \arg\min_{u\in\RR^{d}}\{\alpha_k\langle F(u_k), u \rangle +D_\omega^{u_k^*}(u,u_k)\},   \\
u_{k+1} &= & \arg\min_{u\in\RR^{d}}\{\alpha_k\langle F(\bar{u}_k), u \rangle +D_\omega^{u_k^*}(u,u_k)\},   \\
u_{k+1}^*&= &u_k^*- \alpha_k F(\bar{u}_k).
\end{array} \right.
\end{eqnarray}
Equivalently, it can be rewritten as
\begin{eqnarray}\label{alg12}
\left\{\begin{array}{lll}
\bar{u}_k &= & \nabla \omega^*(u_k^*- \alpha_k F(u_k)),   \\
u_{k+1} &= & \nabla \omega^*(u_k^*- \alpha_k F(\bar{u}_k)),    \\
u_{k+1}^*&= &u_k^*- \alpha_k F(\bar{u}_k).
\end{array} \right.
\end{eqnarray}
Due to the newly introduced ``parameter" $\omega$, BEG not only includes the standard extragradient method as its special case by taking $\omega(u)=\frac{1}{2}\|u\|^2$, but also generates implicitly regularized variants. To illustrate the latter, we take $\omega$ as the augmented $\ell_1$-norm \cite{2013Aug}, that is $\omega(u)=\gamma\|u\|_1+\frac{1}{2}\|u\|^2$, where $\gamma$ is a positive constant and $\|\cdot\|_1$ is the $\ell_1$-norm defined as the sum of absolute values of the entries. It is easy to see that the augmented $\ell_1$-norm is a strongly convex function with modulus $\mu=1$. Further, we have
\begin{equation}\label{shrink1}
\nabla \omega^*(\cdot)=\cS_\gamma(\cdot),
\end{equation}
where $\cS_\gamma(\cdot)$ is the well-known shrinkage operator defined by
 $$\cS_\gamma(u):=\textrm{sign}(u)\max\{|u|-\mu,0\},$$
with  $\textrm{sign}(\cdot), |\cdot|$, and $\max\{\cdot, \cdot\}$ being component-wise operations for vectors. Now, the BEG method for this special case becomes
\begin{eqnarray}\label{alg13}
\left\{\begin{array}{lll}
\bar{u}_k &= & \cS_\gamma(u_k^*- \alpha_k F(u_k)),   \\
u_{k+1} &= & \cS_\gamma(u_k^*- \alpha_k F(\bar{u}_k)),    \\
u_{k+1}^*&= &u_k^*- \alpha_k F(\bar{u}_k).
\end{array} \right.
\end{eqnarray}
Because the shrinkage operator $\cS_\gamma(\cdot)$ generates sparse vectors, we call the newly appeared scheme \eqref{alg13} sparse extragradient method. Its remarkable advantage is that the iterates may be sparse, although the shrinkage operations also increase computational cost. Thereby, how to balance these two sides is worthy of future studying. In addition, it should be noted that although the iterates could be sparse, their averaged sequences, whose iteration complexities will be studied, may be dense.

In slightly more general, let us consider the case of $\omega(u)=\psi(u)+\frac{1}{2}\|u\|^2$, where $\psi$ is a convex regularized function with an ``easily" computational proximal operator given by
$$\prox_\psi(u):=\arg\min_v\{\psi(v)+\frac{1}{2}\|u-v\|^2\}.$$
Substituting such $\omega$ into \eqref{alg1}, we immediately obtain the following regularized extragradient method
\begin{eqnarray}\label{alg14}
\left\{\begin{array}{lll}
\bar{u}_k &= & \prox_\psi(u_k^*- \alpha_k F(u_k)),   \\
u_{k+1} &= & \prox_\psi(u_k^*- \alpha_k F(\bar{u}_k)),    \\
u_{k+1}^*&= &u_k^*- \alpha_k F(\bar{u}_k).
\end{array} \right.
\end{eqnarray}
It seems that, to the best of our knowledge, such variants have not appeared before us.

\subsection{Bregman extrapolation method}
Let $u_0, u_{-1}, u_0^*\in\RR^{d}$, positive parameters $\{\alpha_k\}$, and nonnegative parameters $\{\beta_k\}$ be given. The Bregman extrapolation method, abbreviated as BEP, updates the iterates $\{u_k\}$ for $k\geq0$ via the following scheme.
\begin{eqnarray}\label{alg2}
\left\{\begin{array}{lll}
u_{k+1}  &= & \arg\min_{u\in\RR^{d}}\{\alpha_k\langle F(u_k)+\beta_k(F(u_k)-F(u_{k-1})), u \rangle +D_\omega^{u_k^*}(u,u_k)\}   \\
u_{k+1}^*&= &u_k^*- \alpha_k F(u_k)-\alpha_k\beta_k(F(u_k)-F(u_{k-1})).
\end{array} \right.
\end{eqnarray}
Equivalently, it can be rewritten as
\begin{eqnarray}\label{alg22}
\left\{\begin{array}{lll}
u_{k+1}&= &  \nabla \omega^*(u_k^*- \alpha_k F(u_k)-\alpha_k\beta_k(F(u_k)-F(u_{k-1}))),   \\
u_{k+1}^*&= &u_k^*- \alpha_k F(u_k)-\alpha_k\beta_k(F(u_k)-F(u_{k-1})).
\end{array} \right.
\end{eqnarray}
BEP is general enough to include several existing algorithms as its special cases. For example, BEP with $\omega=\frac{1}{2}\|\cdot\|^2$ and $\alpha_k\equiv\eta, \beta_k\equiv 1$ recovers the optimistic gradient descent ascent (OGDA) method; it also recovers the operator extrapolation method \cite{2020simple} by taking $\omega$ to be differentiable and the modified forward-backward splitting \cite{2020A} specialized to our setting. When considering the case of $\omega(u)=\gamma\|u\|_1+\frac{1}{2}\|u\|^2$, we have the following scheme, called sparse extrapolation method.
\begin{eqnarray}\label{alg23}
\left\{\begin{array}{lll}
u_{k+1}&= &  \cS_\gamma(u_k^*- \alpha_k F(u_k)-\alpha_k\beta_k(F(u_k)-F(u_{k-1}))),   \\
u_{k+1}^*&= &u_k^*- \alpha_k F(u_k)-\alpha_k\beta_k(F(u_k)-F(u_{k-1})).
\end{array} \right.
\end{eqnarray}
Corresponding to the case of  $\omega(u)=\psi(u)+\frac{1}{2}\|u\|^2$, the regularized extrapolation method reads as
\begin{eqnarray}\label{alg24}
\left\{\begin{array}{lll}
u_{k+1}&= &  \prox_\psi(u_k^*- \alpha_k F(u_k)-\alpha_k\beta_k(F(u_k)-F(u_{k-1}))),   \\
u_{k+1}^*&= &u_k^*- \alpha_k F(u_k)-\alpha_k\beta_k(F(u_k)-F(u_{k-1})).
\end{array} \right.
\end{eqnarray}
At last, we would like to point out that the BEP method does not cover the
simultaneous centripetal acceleration and alternating centripetal acceleration methods, proposed in our recent work \cite{2020Training} for training GANs, one of which also includes the OGDA as a special case.

\section{Iteration properties of the algorithmic frameworks}\label{se4}
In order to provide a unified convergence analysis for the previously introduced algorithmic frameworks, we make the following assumptions about the operator $F$ and the function $\omega$.

\begin{assumption}\label{Lip}
Let $\omega:\RR^d\rightarrow \RR$ be a strongly convex function with modulus $\mu>0$ and let $\lambda$ be a positive parameter.  The operator $F$ is $\lambda$-relatively Lipschitz with respect to $\omega$, i.e., for any $u, v, z\in \RR^d$, we have
   $$\langle F(v)-F(u),v-z\rangle \leq \lambda(D_\omega^{u^*}(v,u)+D_\omega^{v^*}(z,v)).$$
\end{assumption}

This assumption is a slight modification of the relative Lipschitzness recently proposed in \cite{2020Relative}, and will be a key tool in the forthcoming convergence analysis. As observed in \cite{2020Relative}, the relative Lipshcitzness is a more general condition encapsulating the standard Lipschitz assumption as well as the more recent relative smoothness assumption \cite{Bauschke2016A,Lu2016relatively}. The following, shown in \cite{2020Relative}, will be required later.

\begin{lemma}\label{resclip}
If $\omega$ is strongly convex with modulus $\mu$ and $F$ is L-Lipschitz in the sense that for any $u, v\in \RR^d$, we have $\|F(u)-F(v)\|\leq L\|u-v\|$, then $F$ is $L/\mu$-relatively Lipschitz with respect to $\omega$.
\end{lemma}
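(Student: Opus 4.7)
The plan is to reduce the relative Lipschitz inequality to a purely Euclidean statement and then invoke the strong-convexity lower bound (\ref{Bregdis3}) on each Bregman term separately. Concretely, I would start from the left-hand side $\langle F(v)-F(u), v-z\rangle$ and apply Cauchy--Schwarz followed by the hypothesized $L$-Lipschitz bound on $F$ to obtain
\[
\langle F(v)-F(u), v-z\rangle \leq \|F(v)-F(u)\|\,\|v-z\| \leq L\,\|v-u\|\,\|v-z\|.
\]

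Next I would split the product by the elementary inequality $ab\leq \tfrac{1}{2}(a^2+b^2)$ applied to $a=\|v-u\|$ and $b=\|v-z\|$, yielding
\[
L\,\|v-u\|\,\|v-z\| \leq \frac{L}{2}\bigl(\|v-u\|^2+\|v-z\|^2\bigr).
\]
At this point the bound has been converted into a sum of squared norms, which is exactly what (\ref{Bregdis3}) from Lemma \ref{lemBreg} controls from above once we pay a factor $\mu$.

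Finally, applying (\ref{Bregdis3}) to both $D_\omega^{u^*}(v,u) \geq \tfrac{\mu}{2}\|v-u\|^2$ and $D_\omega^{v^*}(z,v) \geq \tfrac{\mu}{2}\|z-v\|^2$ gives
\[
\frac{L}{2}\bigl(\|v-u\|^2+\|v-z\|^2\bigr) \leq \frac{L}{\mu}\bigl(D_\omega^{u^*}(v,u)+D_\omega^{v^*}(z,v)\bigr),
\]
which chains with the previous two displays to deliver the desired $L/\mu$-relative Lipschitz inequality. The argument is essentially mechanical; the only thing to be careful about is matching the subgradients $u^*$ and $v^*$ with the correct ``center'' in each Bregman term so that (\ref{Bregdis3}) applies verbatim, and noting that the choice $ab\leq \tfrac12(a^2+b^2)$ (rather than a weighted Young's inequality) is precisely what produces a symmetric coefficient $L/\mu$ on both Bregman terms. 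I do not anticipate a genuine obstacle here — the statement is a direct specialization made to surface the interplay between standard Lipschitzness of $F$ and strong convexity of $\omega$.
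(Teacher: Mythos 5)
Your proof is correct: Cauchy--Schwarz plus the $L$-Lipschitz bound, the unweighted Young inequality $ab\leq\tfrac12(a^2+b^2)$, and the strong-convexity lower bound \eqref{Bregdis3} applied to each Bregman term with the centers $u$ and $v$ respectively chain together to give exactly the $L/\mu$ constant. The paper itself offers no proof of this lemma (it defers to the cited reference of Cohen et al.), and your argument is precisely the standard one used there, so there is nothing to reconcile.
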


\begin{assumption}\label{nonemp}
The solution set of $F$ defined as $\cY:=\{u: F(u)=0\}$ is nonempty.
\end{assumption}

\begin{assumption}\label{mono}
The operator $F$ is monotone, i.e., for any $u, v\in \RR^d$  we have
   $$\langle F(u)-F(v),u-v\rangle \geq 0.$$
\end{assumption}

\begin{assumption}\label{coer}
The conjugate function $\omega^*:\RR^d\rightarrow \RR$ satisfies coercivity, i.e, for any fixed $v\in \RR^d$, we have
$$\omega^*(u^*)-\langle v, u^*\rangle\rightarrow +\infty,\quad \|u^*\|\rightarrow +\infty.$$
\end{assumption}
It is easy to verify that Assumption \ref{coer} holds for $\omega^*(u^*)=\frac{1}{2}\|u^*\|^2$. Actually, it holds for the conjugates of strongly convex functions, as a direct result of Proposition 14.15 in \cite{Bauschke2017}.
\begin{lemma}\label{coer1}
Let $\omega:\RR^d\rightarrow \RR$ be a strongly convex function with modulus $\mu>0$. Then the conjugate function $\omega^*$ satisfies the coercivity.
\end{lemma}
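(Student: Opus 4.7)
The plan is to work directly from the definition of the Fenchel conjugate, which supplies a whole family of lower bounds:
$$\omega^*(u^*) \;\geq\; \langle u^*, w\rangle - \omega(w) \qquad \text{for every } w\in\RR^d.$$
The strategy is to choose $w$ as a function of $u^*$ so as to extract enough growth in $\|u^*\|$ to dominate the linear term $\langle v, u^*\rangle$.

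Concretely, for fixed $v\in\RR^d$ and any $u^*\neq 0$, I would take $w := v + u^*/\|u^*\|$, which aligns $w-v$ with $u^*$ and sits at unit distance from $v$. Substituting into the conjugate bound and subtracting $\langle v, u^*\rangle$ gives
$$\omega^*(u^*) - \langle v, u^*\rangle \;\geq\; \|u^*\| - \omega\!\left(v + \tfrac{u^*}{\|u^*\|}\right).$$
Next I would use that $\omega$ is a finite-valued convex function on all of $\RR^d$, hence continuous, and therefore bounded on the compact unit sphere centered at $v$. Setting $C := \sup_{\|e\|=1}\omega(v+e) < \infty$, the right-hand side is at least $\|u^*\| - C$, which goes to $+\infty$ as $\|u^*\|\to\infty$, proving the claim.

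There is really no serious obstacle — the only conceptual point is the choice of $w$ and the boundedness of a finite convex function on compact sets. Note that strong convexity is actually overkill for this statement: mere finiteness of $\omega$ on $\RR^d$ is enough. If one preferred a more ``structural'' route mirroring the author's reference to Proposition 14.15 of \cite{Bauschke2017}, one could instead observe that strong convexity (Definition \ref{sc0}) forces $\omega(u)/\|u\| \to +\infty$ as $\|u\|\to\infty$, i.e., $\omega$ is supercoercive, and then invoke the standard Legendre--Fenchel duality between supercoercivity of $\omega$ and coercivity of $\omega^*-\langle v,\cdot\rangle$; but the direct unit-sphere argument above is self-contained and avoids citing external results.
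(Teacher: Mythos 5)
Your proof is correct. The inequality $\omega^*(u^*)\geq\langle u^*,w\rangle-\omega(w)$ with the choice $w=v+u^*/\|u^*\|$ gives exactly $\omega^*(u^*)-\langle v,u^*\rangle\geq\|u^*\|-\omega\bigl(v+u^*/\|u^*\|\bigr)$, and since a convex function that is finite on all of $\RR^d$ is continuous and hence bounded on the compact sphere $\{v+e:\|e\|=1\}$, the right-hand side is bounded below by $\|u^*\|-C$ and tends to $+\infty$. This is a genuinely different route from the paper, which does not argue at all: it simply invokes Proposition 14.15 of \cite{Bauschke2017} (the duality between growth properties of $\omega$ and coercivity-type properties of $\omega^*$) as a black box. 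Your direct unit-sphere argument buys self-containedness and, as you correctly observe, reveals that the hypothesis of strong convexity is not needed --- finiteness of the convex function $\omega$ on all of $\RR^d$ already suffices, since the conclusion is equivalent to coercivity of $(\omega(\cdot+v))^*$ for every $v$, which only requires $\omega(\cdot+v)$ to be bounded above near the origin. The paper's citation-based route is shorter on the page but hides the mechanism and nominally uses the stronger hypothesis; either is acceptable, and your alternative ``structural'' remark via supercoercivity is also a valid way to connect back to the cited proposition.
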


Now, we are ready to present the main results of this study. The first result is a upper bound of the ``regret measure" $\langle \alpha_k F(\bar{u}_k), \bar{u}_k-u \rangle$ by the difference between the generalized Bregman distances $D_\omega^{u_k^*}(u,u_k)$ and $D_\omega^{u_{k+1}^*}(u,u_{k+1})$, for the Bregman extragradient method.
\begin{proposition}\label{conEG}
Let $\{\bar{u}_k, u_k\}$ be the iterates generated by the Bregman extragradient method introduced in \eqref{alg1}. Suppose that Assumption \ref{Lip} holds and the parameters $\alpha_k$ satisfy $0<\lambda\alpha_k\leq 1$.
Then, we have
\begin{equation}\label{recur1}
\langle \alpha_k F(\bar{u}_k), \bar{u}_k-u \rangle \leq D_\omega^{u_k^*}(u,u_k) - D_\omega^{u_{k+1}^*}(u,u_{k+1}).
\end{equation}
Moreover, if Assumptions \ref{nonemp}-\ref{mono} also hold, then the sequence $\{\bar{u}_k\}$ is bounded.
\end{proposition}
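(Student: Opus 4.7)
The plan is to derive \eqref{recur1} by applying the three-point identity \eqref{Bregdis1} twice, with the residual cross term absorbed via the relative Lipschitzness of Assumption \ref{Lip}. Using the equivalent form \eqref{alg12} together with the subgradient-inversion part of Lemma \ref{scLip}, I would first record $\bar u_k^{\,*}:=u_k^*-\alpha_k F(u_k)\in\partial\omega(\bar u_k)$ and $u_{k+1}^{\,*}=u_k^*-\alpha_k F(\bar u_k)\in\partial\omega(u_{k+1})$, so that inner products involving $\alpha_k F(\cdot)$ convert to the dual-variable differences $q^*-p^*$ needed to invoke \eqref{Bregdis1}.

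Then I would apply \eqref{Bregdis1} twice: once with $(p,q)=(u_{k+1},u_k)$ at a general $u$, which yields
$$\alpha_k\langle F(\bar u_k),u_{k+1}-u\rangle = D_\omega^{u_k^*}(u,u_k)-D_\omega^{u_{k+1}^*}(u,u_{k+1})-D_\omega^{u_k^*}(u_{k+1},u_k),$$
and once with $(p,q)=(\bar u_k,u_k)$ at the specific point $u=u_{k+1}$, which yields
$$\alpha_k\langle F(u_k),\bar u_k-u_{k+1}\rangle = D_\omega^{u_k^*}(u_{k+1},u_k)-D_\omega^{\bar u_k^*}(u_{k+1},\bar u_k)-D_\omega^{u_k^*}(\bar u_k,u_k).$$
Adding these two identities and writing $F(\bar u_k)=F(u_k)+[F(\bar u_k)-F(u_k)]$, the two $D_\omega^{u_k^*}(u_{k+1},u_k)$ terms cancel, and rearranging produces the right-hand side of \eqref{recur1} minus the two nonnegative Bregman distances $D_\omega^{u_k^*}(\bar u_k,u_k)$ and $D_\omega^{\bar u_k^*}(u_{k+1},\bar u_k)$ plus the residual $\alpha_k\langle F(\bar u_k)-F(u_k),\bar u_k-u_{k+1}\rangle$. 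Assumption \ref{Lip} with $u=u_k$, $v=\bar u_k$, $z=u_{k+1}$ bounds this residual by $\alpha_k\lambda\bigl[D_\omega^{u_k^*}(\bar u_k,u_k)+D_\omega^{\bar u_k^*}(u_{k+1},\bar u_k)\bigr]$, and invoking $\alpha_k\lambda\le 1$ cancels the two negative Bregman terms exactly, so \eqref{recur1} drops out. The delicate point is the matching: one must pick the two three-point identities and the $(u,v,z)$ triple in Assumption \ref{Lip} so that the four generated Bregman distances align with no slack. This is the main obstacle.

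For the boundedness claim, pick $u^\star\in\cY$. Monotonicity (Assumption \ref{mono}) together with $F(u^\star)=0$ gives $\langle F(\bar u_k),\bar u_k-u^\star\rangle\ge0$, so \eqref{recur1} at $u=u^\star$ yields the Fej\'er-type descent $D_\omega^{u_{k+1}^*}(u^\star,u_{k+1})\le D_\omega^{u_k^*}(u^\star,u_k)$. The Fenchel identity of Lemma \ref{scLip} rewrites this distance as $\omega(u^\star)+\omega^*(u_k^*)-\langle u_k^*,u^\star\rangle$; coercivity of $\omega^*$ (Lemma \ref{coer1}) therefore keeps $\{u_k^*\}$ bounded, and the gradient-Lipschitzness of $\omega^*$ (Lemma \ref{scLip}) transfers boundedness to $\{u_k\}=\{\nabla\omega^*(u_k^*)\}$. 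To upgrade to $\{\bar u_k\}$, I would apply \eqref{Bregdis1} once more with $(p,q)=(\bar u_k,u_k)$ at $u=u^\star$, drop the nonpositive monotonicity contribution $\alpha_k\langle F(u_k)-F(u^\star),u^\star-u_k\rangle$, and invoke Assumption \ref{Lip} with $u=u^\star$, $v=u_k$, $z=\bar u_k$ (again using $\alpha_k\lambda\le 1$) to conclude $D_\omega^{\bar u_k^*}(u^\star,\bar u_k)\le D_\omega^{u_k^*}(u^\star,u_k)+D_\omega^{\xi}(u_k,u^\star)$ for any fixed $\xi\in\partial\omega(u^\star)$. Both summands are bounded (the second because $\omega$ is continuous on the bounded sequence $\{u_k\}$), so coercivity forces $\{\bar u_k^{\,*}\}$ bounded and hence $\{\bar u_k\}=\{\nabla\omega^*(\bar u_k^{\,*})\}$ is bounded.
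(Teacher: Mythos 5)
Your derivation of \eqref{recur1} is exactly the paper's: the same two instances of the three-point identity \eqref{Bregdis1} (one with $(p,q)=(u_{k+1},u_k)$ at general $u$, one with $(p,q)=(\bar u_k,u_k)$ evaluated at $u=u_{k+1}$), the same cancellation of the two $D_\omega^{u_k^*}(u_{k+1},u_k)$ terms, and the same absorption of the residual $\alpha_k\langle F(\bar u_k)-F(u_k),\bar u_k-u_{k+1}\rangle$ via Assumption \ref{Lip} with $(u,v,z)=(u_k,\bar u_k,u_{k+1})$ and $\lambda\alpha_k\le 1$. The first half of your boundedness argument (Fej\'er descent of $D_\omega^{u_k^*}(\bar u,u_k)$, conversion to $\omega^*(u_k^*)-\langle\bar u,u_k^*\rangle$ via the Fenchel identity, coercivity of $\omega^*$ to bound $\{u_k^*\}$, then $1/\mu$-Lipschitzness of $\nabla\omega^*$ to bound $\{u_k\}$) is likewise identical to the paper's. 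Where you genuinely diverge is the final upgrade: the paper's proof stops after establishing boundedness of $\{u_t\}$, even though the proposition asserts boundedness of $\{\bar u_k\}$, so strictly speaking the paper leaves that last implication unproved. Your additional step --- applying \eqref{Bregdis1} with $(p,q)=(\bar u_k,u_k)$ at $u=\bar u$, splitting $\alpha_k\langle F(u_k),\bar u-\bar u_k\rangle$ into a nonpositive monotonicity term and a remainder controlled by Assumption \ref{Lip} with $(u,v,z)=(\bar u,u_k,\bar u_k)$, then reusing coercivity --- is correct and closes that gap, so on this point your proof is more complete than the paper's own.
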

The first part of this result is a modification of Lemma 3.1 in \cite{2004Prox} (see also Proposition 1 in \cite{2020Relative}), and the second part and the followings are partially inspired by the work \cite{2020Convergence}.

\begin{proposition}\label{conEP}
Let $\{u_k\}$ be the iterates generated by the Bregman extrapolation method introduced in \eqref{alg2} with the initial conditions $u_0=u_{-1}$. Suppose that Assumption \ref{Lip} holds and the parameters $\alpha_k, \beta_k$ satisfy the condition
\begin{eqnarray}\label{eq0}
\left\{\begin{array}{lll}
\alpha_k\beta_k=\alpha_{k-1}, \\
\lambda(\alpha_k+\alpha_{k-1})\leq 1.
\end{array} \right.
\end{eqnarray}
Then, we have
\begin{eqnarray}\label{eq01}
\begin{array}{lll}
 \alpha_k\langle F(u_{k+1}), u_{k+1}-u\rangle&\leq & \alpha_k\langle F(u_{k+1})-F(u_k), u_{k+1}-u\rangle-\alpha_{k-1}\langle F(u_k)-F(u_{k-1}), u_k-u\rangle\\
 &&+D_\omega^{u_k^*}(u,u_k) - D_\omega^{u_{k+1}^*}(u,u_{k+1})\\
 &&+\lambda\alpha_{k-1}D_\omega^{u_{k-1}^*}(u_k,u_{k-1}) - \lambda\alpha_kD_\omega^{u_k^*}(u_{k+1},u_k).
\end{array}
\end{eqnarray}
Moreover, if Assumptions \ref{nonemp}-\ref{mono} also hold and there is a positive constant $\rho$ such that $\lambda\alpha_k\leq 1-\rho$, then the sequence $\{u_k\}$ is bounded.
\end{proposition}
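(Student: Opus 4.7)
The plan is to mimic the template of Proposition~\ref{conEG}: apply the three-point identity \eqref{Bregdis1} to isolate a telescoping pair, then invoke the relative Lipschitzness of Assumption~\ref{Lip} to absorb the residual cross term produced by the extrapolation step. The role of the condition $\alpha_k\beta_k=\alpha_{k-1}$ is to rewrite the correction $\alpha_k\beta_k(F(u_k)-F(u_{k-1}))$ with a coefficient that matches, at step $k+1$, the ``old'' gradient difference that reappears there, making a telescoping bookkeeping in $k$ possible.

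For the iterate inequality \eqref{eq01}, I would start from the dual update $u_k^*-u_{k+1}^* = \alpha_k F(u_k) + \alpha_{k-1}(F(u_k)-F(u_{k-1}))$ and plug it into \eqref{Bregdis1} with $p=u_{k+1}$, $q=u_k$, obtaining
\begin{equation*}
\langle \alpha_k F(u_k)+\alpha_{k-1}(F(u_k)-F(u_{k-1})),\, u_{k+1}-u\rangle = D_\omega^{u_k^*}(u,u_k)-D_\omega^{u_{k+1}^*}(u,u_{k+1})-D_\omega^{u_k^*}(u_{k+1},u_k).
\end{equation*}
Then I would add and subtract $\alpha_k F(u_{k+1})$ to bring $\alpha_k\langle F(u_{k+1}), u_{k+1}-u\rangle$ to the left, and decompose $\alpha_{k-1}\langle F(u_k)-F(u_{k-1}), u_{k+1}-u\rangle$ into a telescoping piece $\alpha_{k-1}\langle F(u_k)-F(u_{k-1}), u_k-u\rangle$ plus a residual $\alpha_{k-1}\langle F(u_k)-F(u_{k-1}), u_{k+1}-u_k\rangle$. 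Applying Assumption~\ref{Lip} with $(u,v,z)$ replaced by $(u_{k-1},u_k,u_{k+1})$ bounds this residual above by $\lambda\alpha_{k-1}(D_\omega^{u_{k-1}^*}(u_k,u_{k-1})+D_\omega^{u_k^*}(u_{k+1},u_k))$. The stepsize condition $\lambda(\alpha_k+\alpha_{k-1})\leq 1$ is exactly what ensures that the leftover coefficient of $D_\omega^{u_k^*}(u_{k+1},u_k)$ is at most $-\lambda\alpha_k$, yielding \eqref{eq01}.

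For boundedness, I pick any $\hat u\in\cY$. Then $F(\hat u)=0$ and monotonicity give $\langle F(u_{k+1}),u_{k+1}-\hat u\rangle=\langle F(u_{k+1})-F(\hat u),u_{k+1}-\hat u\rangle\geq 0$; substituting $u=\hat u$ in \eqref{eq01} and rearranging therefore yield $E_{k+1}\leq E_k$ for the Lyapunov-type quantity
\begin{equation*}
E_k := D_\omega^{u_k^*}(\hat u,u_k)-\alpha_{k-1}\langle F(u_k)-F(u_{k-1}),\, u_k-\hat u\rangle+\lambda\alpha_{k-1}D_\omega^{u_{k-1}^*}(u_k,u_{k-1}).
\end{equation*}
The initialization $u_0=u_{-1}$ collapses $E_0$ to the finite constant $D_\omega^{u_0^*}(\hat u,u_0)$. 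To lower-bound $E_k$ for $k\geq 1$, I would apply Assumption~\ref{Lip} once more with $(u,v,z)$ replaced by $(u_{k-1},u_k,\hat u)$ and combine with $\lambda\alpha_{k-1}\leq 1-\rho$ to get $E_k\geq (1-\lambda\alpha_{k-1})D_\omega^{u_k^*}(\hat u,u_k)\geq \rho D_\omega^{u_k^*}(\hat u,u_k)$. Hence $D_\omega^{u_k^*}(\hat u,u_k)\leq E_0/\rho$ uniformly in $k$, and \eqref{Bregdis3} converts this into the uniform bound $\|u_k-\hat u\|^2\leq 2E_0/(\rho\mu)$, proving boundedness.

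The main obstacle is identifying the correct Lyapunov quantity $E_k$: its three components are finely tuned so that the single term $\lambda\alpha_{k-1}D_\omega^{u_{k-1}^*}(u_k,u_{k-1})$ plays two roles at once---as the reserve that makes the asymmetric telescoping in \eqref{eq01} collapse into $E_{k+1}\leq E_k$, and as the slack paired with a second application of relative Lipschitzness to certify $E_k\geq \rho D_\omega^{u_k^*}(\hat u,u_k)$. Without the strict slack $\rho>0$ in the stepsize, the lower bound would be merely nonnegative, so one could not convert the monotone decrease of $E_k$ into control on $\|u_k-\hat u\|$.
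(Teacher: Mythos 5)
Your proposal is correct, and the derivation of \eqref{eq01} is essentially the paper's own argument: same dual-update identity fed into \eqref{Bregdis1}, same add-and-subtract of $\alpha_k F(u_{k+1})$, same split of the cross term, same application of Assumption~\ref{Lip} to $\langle F(u_k)-F(u_{k-1}),u_k-u_{k+1}\rangle$, and the same use of $\lambda(\alpha_k+\alpha_{k-1})\le 1$ to absorb the leftover coefficient. For the boundedness part the content is again the same---your monotone Lyapunov quantity $E_k$ is exactly what the paper obtains by telescoping \eqref{eq01} and keeping the boundary terms, and your lower bound $E_k\ge(1-\lambda\alpha_{k-1})D_\omega^{u_k^*}(\hat u,u_k)$ is the paper's estimate \eqref{eqd2} applied to the boundary term---but your concluding step differs: you pass directly from the uniform bound on $D_\omega^{u_k^*}(\hat u,u_k)$ to $\|u_k-\hat u\|$ via the strong-convexity inequality \eqref{Bregdis3}, whereas the paper takes a detour through the coercivity of $\omega^*$ (Lemma~\ref{coer1}) to first bound the dual iterates $\{u_k^*\}$ and then uses the $\tfrac1\mu$-Lipschitzness of $\nabla\omega^*$. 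Your ending is shorter and needs neither Assumption~\ref{coer} nor the dual variables; it only requires noting that $u_k^*\in\partial\omega(u_k)$ (which holds by construction of the scheme, and for $k=0$ by the implicit initialization), so \eqref{Bregdis3} is indeed applicable.
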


To illustrate the generality of the result above, we consider the OGDA method, i.e., the special case of BEP with $\alpha_k\equiv\eta, \beta_k\equiv1, \omega =\frac{1}{2}\|\cdot\|^2$. The condition on the step size $\eta$ becomes $0<\eta\leq \frac{1}{2L}$ due to \eqref{eq0} and $\mu=1, \lambda=\frac{L}{\mu}$ because of Lemma \ref{resclip}. Note that $D_\omega^{v^*}(u,v)=\frac{1}{2}\|u-v\|^2$. Under this setting, the existence of $\rho$ such that $\lambda\alpha_k\leq 1-\rho$ also holds. From \eqref{eq01}, we obtain
\begin{eqnarray}\label{eq001}
\begin{array}{lll}
\langle F(u_{k+1}), u_{k+1}-u\rangle&\leq & \langle F(u_{k+1})-F(u_k), u_{k+1}-u\rangle-\langle F(u_k)-F(u_{k-1}), u_k-u\rangle\\
 &&+\frac{1}{2\eta}\|u-u_k\|^2  -\frac{1}{2\eta}\|u-u_{k+1}\|^2  \\
 &&+\frac{L}{2} \|u_k-u_{k-1}\|^2 - \frac{L}{2}  \|u_{k+1}-u_k\|^2,
\end{array}
\end{eqnarray}
which is exactly Lemma 8(a) in \cite{2020Convergence}. The second part of Proposition \eqref{conEP} guarantees the boundedness of $\{u_k$\}, which recovers Lemma 8(b) in \cite{2020Convergence}.


\section{Iteration complexity for saddle point problems}\label{se5}
In this section, we first formulate the saddle point problem which we want to solve and then deduce the iteration complexity results.

\subsection{Problem formulation and preliminary results}
Let $f: \RR^m\times \RR^n\rightarrow \RR$ be a given function. We consider finding a saddle point $(\bar{x},\bar{y})$ of the problem
\begin{equation}\label{sadd}
\min_{x\in \RR^m}\max_{y\in \RR^n} f(x,y). \\[4pt]
\end{equation}
In other words, find a pair $(\bar{x},\bar{y})\in \RR^m\times \RR^n$, which will be called saddle point, to satisfy the following relation
$$f(\bar{x}, y)\leq f(\bar{x},\bar{y})\leq f(x,\bar{y})$$
for all $x\in \RR^m, y\in \RR^n$.
Let $z=[x; y]\in \RR^{n+m}$ and define the operator $F: \RR^{n+m}\rightarrow \RR^{n+m}$ as
\begin{equation}\label{oper}
F(z):= [\nabla_xf(x, y); -\nabla_yf(x, y)].
\end{equation}
In order to apply previous theory to this specialized operator $F$,
we restrict our attention to the problem \eqref{sadd} with the following assumption.
\begin{assumption}\label{basic}
The set of all saddle point pairs of the problem \eqref{sadd}, denoted by $\cZ$, is nonempty. Let positive parameters $L_{xx}, L_{xy}, L_{yy}, L_{yx}$ be given and let
    $$L:=2\times \max\{L_{xx}, L_{xy}, L_{yy}, L_{yx}\}.$$  The function $f(x,y)$ in \eqref{sadd} is
\begin{itemize}
  \item continuously differetiable in $x$ and $y$,
  \item convex in $x$ for any fixed $y$ and concave in $y$ for any fixed $x$, and
  \item gradient-Lipschitz-continuous with modulus $L$ in the sense that
   $$\|\nabla_xf(x_1, y)-\nabla_xf(x_2, y)\| \leq L_{xx}\|x_1-x_2\| \quad \textrm{for all}\quad y,$$
   $$\|\nabla_xf(x, y_1)-\nabla_xf(x, y_2)\| \leq L_{xy}\|y_1-y_2\|\quad \textrm{for all}\quad x,$$
      $$\|\nabla_yf(x, y_1)-\nabla_yf(x, y_2)\| \leq L_{yy}\|y_1-y_2\|\quad \textrm{for all}\quad x,$$
         $$\|\nabla_yf(x_1, y)-\nabla_yf(x_2, y)\| \leq L_{yx}\|x_1-x_2\|\quad \textrm{for all}\quad y.$$
\end{itemize}
\end{assumption}
The gradient-Lipschitz-continuity above implies the standard Lipschitzness of $F$, which further implies the relative Lipschitzness of $F$ due to Lemma \ref{resclip}.
Now, by substituting the formula \eqref{oper} for $F$ in the algorithmic frameworks \eqref{alg1} and \eqref{alg2}, the saddle point problems in the form \eqref{sadd} could be solved in some degree. Especially, for the saddle point problems that satisfy Assumption \ref{basic}, we will derive the convergence rate of $\cO(1/k)$ for the BEG and BEP methods. To this end, we let $\{z_k\}$ be the concerned iterate sequence with $z_k:=[x_k;y_k]\in \RR^{n+m}$. Let $\{r_k\}$ be a parameter sequence with $r_k>0$ and let $s_k:=\sum_{i=0}^kr_i$. We denote the averaged iterates of $\{z_k\}$ by $\{r_k\}$ as follows
\begin{equation}\label{aver}
\hat{z}_k:=[\hat{x}_k;\hat{y}_k]:=[\sum_{i=0}^k\frac{r_i}{s_i}x_i; \sum_{i=0}^k\frac{r_i}{s_i}x_i]=\sum_{i=0}^k\frac{r_i}{s_i}z_i.
\end{equation}
In terms of these notations, we have the following preliminary results for the saddle point problem \eqref{sadd}. It should be pointed out that they are well known--see for example \cite{2004Prox}.
\begin{lemma}\label{lempre}
Consider the saddle point problem \eqref{sadd} with Assumption \ref{basic} and recall the definitions in \eqref{oper}-\eqref{aver}. Let $\omega:\RR^d\rightarrow \RR$ be a strongly convex function with modulus $\mu>0$. Then,
\begin{itemize}
\item Assumptions \ref{nonemp}-\ref{mono} and Assumption \ref{Lip} with $\lambda=\frac{L}{\mu}$ hold for the operator $F$, and especially, $\cZ\subset \cY$, i.e., $F(\bar{z})=0$ for any $\bar{z}\in \cZ$;
\item For any $z=[x; y]\in \RR^{n+m}$, we have
\begin{equation}\label{gap}
f(\hat{x}_k,y)-f(x, \hat{y}_k)\leq \frac{1}{s_k}\sum_{i=0}^k r_i\langle F(z_i), z_i-z\rangle.
\end{equation}
\end{itemize}
\end{lemma}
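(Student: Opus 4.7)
The plan is to treat the two bullets separately, both being direct consequences of the convex--concave structure recorded in Assumption \ref{basic}.

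For the first bullet, I would verify the three required properties of $F$ in turn. For Assumption \ref{nonemp}, if $\bar z = [\bar x;\bar y] \in \cZ$, then $\bar x$ globally minimizes $f(\cdot,\bar y)$ over $\RR^m$ and $\bar y$ globally maximizes $f(\bar x,\cdot)$ over $\RR^n$, so the smooth unconstrained first-order optimality conditions force $\nabla_x f(\bar x,\bar y)=0$ and $\nabla_y f(\bar x,\bar y)=0$, i.e.\ $F(\bar z)=0$. This gives $\cZ\subset\cY$, and nonemptiness of $\cZ$ transfers to $\cY$. For Assumption \ref{mono}, I split $\langle F(z_1),z_1-z_2\rangle$ using convexity of $f(\cdot,y_1)$ and concavity of $f(x_1,\cdot)$ to obtain $\langle F(z_1),z_1-z_2\rangle\geq f(x_1,y_2)-f(x_2,y_1)$, and symmetrically $\langle F(z_2),z_1-z_2\rangle\leq f(x_1,y_2)-f(x_2,y_1)$; subtracting gives $\langle F(z_1)-F(z_2),z_1-z_2\rangle\geq 0$. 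For Assumption \ref{Lip}, I apply the triangle inequality to the splitting
\begin{equation*}
\nabla_x f(x_1,y_1)-\nabla_x f(x_2,y_2) = \bigl[\nabla_x f(x_1,y_1)-\nabla_x f(x_2,y_1)\bigr] + \bigl[\nabla_x f(x_2,y_1)-\nabla_x f(x_2,y_2)\bigr],
\end{equation*}
and its $\nabla_y f$ analogue, then use the four partial bounds in Assumption \ref{basic} together with $(a+b)^2\leq 2(a^2+b^2)$ twice to get $\|F(z_1)-F(z_2)\|^2\leq 4M^2\|z_1-z_2\|^2$ with $M=\max\{L_{xx},L_{xy},L_{yy},L_{yx}\}$, i.e.\ $F$ is $L$-Lipschitz with $L=2M$. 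Lemma \ref{resclip} then upgrades this to $(L/\mu)$-relative Lipschitzness with respect to $\omega$.

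For the second bullet, I would first peel off the averaging: convexity of $f(\cdot,y)$ and concavity of $f(x,\cdot)$ applied to the convex combination \eqref{aver} give
\begin{equation*}
f(\hat x_k,y)-f(x,\hat y_k)\leq \frac{1}{s_k}\sum_{i=0}^k r_i\bigl[f(x_i,y)-f(x,y_i)\bigr].
\end{equation*}
Then for each index $i$, I insert $f(x_i,y_i)$ and apply two first-order inequalities: convexity of $f(\cdot,y_i)$ yields $f(x_i,y_i)-f(x,y_i)\leq \langle \nabla_x f(x_i,y_i),x_i-x\rangle$, while concavity of $f(x_i,\cdot)$ yields $f(x_i,y)-f(x_i,y_i)\leq \langle \nabla_y f(x_i,y_i),y-y_i\rangle$. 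Adding them assembles the right-hand side into $\langle \nabla_x f(x_i,y_i),x_i-x\rangle+\langle -\nabla_y f(x_i,y_i),y_i-y\rangle=\langle F(z_i),z_i-z\rangle$ by definition \eqref{oper}. Substituting back produces \eqref{gap}.

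There is no genuine obstacle: every step is a classical convex-analysis manipulation. The only mildly delicate bookkeeping is matching the four componentwise Lipschitz bounds to the aggregate constant $L=2\max\{L_{xx},L_{xy},L_{yy},L_{yx}\}$, which is precisely why two applications of $(a+b)^2\leq 2(a^2+b^2)$ -- one on the mixed gradient difference, one on $\|x_1-x_2\|^2+\|y_1-y_2\|^2$ versus $\|z_1-z_2\|^2$ -- are needed to land exactly on the stated constant rather than a looser one.
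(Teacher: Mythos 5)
Your proof is correct and is exactly the classical argument the paper relies on: the paper gives no proof of Lemma \ref{lempre} at all, declaring the facts well known and citing \cite{2004Prox}, and every step you supply (first-order optimality at a saddle point for $\cZ\subset\cY$, the two-sided bound $\langle F(z_2),z_1-z_2\rangle\leq f(x_1,y_2)-f(x_2,y_1)\leq\langle F(z_1),z_1-z_2\rangle$ for monotonicity, the triangle-inequality bookkeeping that lands on $L=2\max\{L_{xx},L_{xy},L_{yy},L_{yx}\}$ followed by Lemma \ref{resclip}, and Jensen plus the gradient inequalities for \eqref{gap}) is the standard one. One minor remark: your averaging step implicitly reads the weights in \eqref{aver} as $r_i/s_k$ (so that they sum to one), which is clearly what is intended and is the reading consistent with \eqref{gap}, but is not what is literally typeset there ($r_i/s_i$, with $x_i$ repeated in the $y$-slot); your version is the correct one.
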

We remark that the gradient Lipschitz continuity in Assumption \ref{basic} is only a sufficient condition guaranteeing the weaker relative Lipschitzness.

\subsection{Iteration complexity results}
The first result is about the iteration complexity of BEG method applied to the saddle point problem with Assumption \ref{basic}. Before presentation, we first clarify some notations.

Let $\{\bar{u}_k\}$ be the iterates generated by the BEG method with initial points $u_0, u^*_0$, $\mu$-strongly convex $\omega$ and $L$-Lipschitz operator $F=[\nabla_xf; -\nabla_yf]$, and the parameters $\{\alpha_k\}$ satisfying $0<L\alpha_k\leq \mu$. To apply the preliminary result in Lemma \ref{lempre}, we let $z_k=\bar{u}_k$ and $r_k=\alpha_k$.
Let $\bar{z}=[\bar{x};\bar{y}]\in \cZ\subset \cY$ be a fixed saddle point. Denote
$$R_1:=\max\{ \max_k\{\|\bar{u}_k\|, \|\bar{z}\|\};$$
then $R_1<+\infty$ due to the boundedness of $\{\bar{u}_k\}$. Let $$\cD_1:=\{z\in \RR^{m+n}: \|z\|\leq R_1\},$$ which is the smallest compact convex set including the iterates $\{\bar{u}_k\}$ (or say $\{z_k\}$) and the saddle point $\bar{z}$. By convexity, it also includes the averaged iterates $\{\hat{z}_k\}$, as defined in \eqref{aver}. In terms of these notations, we have the following iteration complexity result, which recovers the convergence rate of $\cO(1/k)$ by taking the positive parameters $\alpha_k$ to be some constant.

\begin{proposition}\label{resu1}
Let $\omega:\RR^d\rightarrow \RR$ be a strongly convex function with modulus $\mu>0$. The BEG method with $\alpha_k$ satisfying $0<L\alpha_k\leq \mu$ and initial points $u_0, u^*_0$ being given, applied to the saddle point problem \eqref{sadd} with Assumption \ref{basic}, converges sublinearly in the sense that
\begin{equation}\label{bound}
|f(\hat{x}_k, \hat{y}_k)-f(\bar{x},\bar{y})|\leq \max_{z:z\in \cD_1} \frac{1}{\sum_{i=0}^k \alpha_i}D_\omega^{u_0^*}(z,u_0).
\end{equation}
\end{proposition}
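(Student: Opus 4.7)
The plan is to chain Proposition \ref{conEG} with a telescoping sum, Lemma \ref{lempre}, and the standard saddle-point trick that turns a gap bound into an absolute function-value bound. First I would verify the hypotheses of Proposition \ref{conEG} in the present setting. Assumption \ref{basic} forces $F=[\nabla_x f;-\nabla_y f]$ to be $L$-Lipschitz, so by Lemma \ref{resclip} it is $(L/\mu)$-relatively Lipschitz with respect to $\omega$, i.e.\ Assumption \ref{Lip} holds with $\lambda=L/\mu$. The stated step-size restriction $0<L\alpha_k\leq\mu$ then reads $0<\lambda\alpha_k\leq 1$, and Lemma \ref{lempre} supplies Assumptions \ref{nonemp}--\ref{mono}. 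Hence the recursion \eqref{recur1} applies and $\{\bar u_k\}$ is bounded, which makes $R_1$ and $\cD_1$ well-defined.

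Second, I would telescope \eqref{recur1} over $i=0,\dots,k$ and discard the nonnegative terminal term using \eqref{Bregdis3}, obtaining for every $z\in\RR^{m+n}$
\begin{equation*}
\sum_{i=0}^k \alpha_i \langle F(\bar u_i),\bar u_i - z\rangle \;\leq\; D_\omega^{u_0^*}(z,u_0).
\end{equation*}
Setting $z_i:=\bar u_i$ and $r_i:=\alpha_i$ (so $s_k=\sum_{i=0}^k\alpha_i$) in Lemma \ref{lempre} converts this into the gap estimate
\begin{equation*}
f(\hat x_k, y) - f(x,\hat y_k) \;\leq\; \frac{1}{s_k}\, D_\omega^{u_0^*}([x;y],u_0) \quad \text{for every } z=[x;y].
\end{equation*}

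Third, to pass from this gap bound to a bound on $|f(\hat x_k,\hat y_k)-f(\bar x,\bar y)|$ I would exploit the saddle-point inequalities $f(\bar x,\hat y_k)\leq f(\bar x,\bar y)\leq f(\hat x_k,\bar y)$ together with two specialized choices of $z$. Plugging in $z=[\bar x;\hat y_k]$ and using $f(\bar x,\hat y_k)\leq f(\bar x,\bar y)$ yields
\begin{equation*}
f(\hat x_k,\hat y_k) - f(\bar x,\bar y) \;\leq\; \frac{1}{s_k}\, D_\omega^{u_0^*}([\bar x;\hat y_k], u_0),
\end{equation*}
while the mirror choice $z=[\hat x_k;\bar y]$ combined with $f(\bar x,\bar y)\leq f(\hat x_k,\bar y)$ yields the symmetric lower bound $f(\bar x,\bar y)-f(\hat x_k,\hat y_k)\leq s_k^{-1}D_\omega^{u_0^*}([\hat x_k;\bar y], u_0)$. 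Since $\cD_1$ is the compact convex set built from the $x$- and $y$-components of the iterates and of $\bar z$, it contains both of these cross vectors, and taking the maximum over $z\in\cD_1$ delivers the claimed inequality \eqref{bound}.

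The main obstacle is really bookkeeping rather than analysis: one must justify that the two cross vectors $[\bar x;\hat y_k]$ and $[\hat x_k;\bar y]$ indeed lie in the set $\cD_1$ over which the maximum on the right of \eqref{bound} is taken. Once this membership is secured from the construction of $R_1$ and $\cD_1$, the rate $\cO(1/s_k)$ falls out of the telescoping, and specializing to constant step sizes $\alpha_k\equiv\alpha$ gives $s_k=(k+1)\alpha$, hence the advertised $\cO(1/k)$ rate.
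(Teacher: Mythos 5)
Your proof is correct and follows essentially the same route as the paper: telescoping \eqref{recur1} into the cumulative bound \eqref{eqc1}, feeding it into Lemma \ref{lempre}, and converting the gap estimate into a two-sided bound on $f(\hat{x}_k,\hat{y}_k)-f(\bar{x},\bar{y})$ via the saddle-point inequalities --- the paper merely phrases the last step as a max/min over slices of $\cD_1$ rather than evaluating directly at the two cross points $[\bar{x};\hat{y}_k]$ and $[\hat{x}_k;\bar{y}]$. The membership question you flag for those cross points is equally present (and equally glossed over) in the paper's own argument, so it is not a defect of your approach relative to theirs.
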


The second result is about the iteration complexity of the BEP method applied to the saddle point problem \eqref{sadd}. Since its deduction is similar to that of the first result, we here just briefly point out the difference notations.
Let $\{u_k\}$ be the iterates generated by the BEP method with $F=[\nabla_xf; -\nabla_yf]$, the initial points $u_0=u_{-1}$ being given, and the positive parameters $\alpha_k, \beta_k$ satisfying the condition \eqref{eq0}. Let $z_k=u_{k+1}$ and $r_k=\alpha_k$.
Let $\bar{z}=[\bar{x};\bar{y}]\in \cZ\subset \cY$ be a fixed saddle point. Denote
$$R_2:=\max\{ \max_k\{\|u_k\|, \|\bar{z}\|\};$$
then $R_2<+\infty$ due to the boundedness of $\{u_k\}$. Let $$\cD_2:=\{z\in \RR^{m+n}: \|z\|\leq R_2\},$$ which is the smallest compact convex set including the iterates $\{u_k\}$ (or say $\{z_k\}$) and the saddle point $\bar{z}$. By convexity, it also includes the averaged iterates $\{\hat{z}_k\}$, as defined in \eqref{aver}. In terms of these notations, we present the second iteration complexity result, whose proof follows directly by replacing \eqref{eqc1} with \eqref{eqd3} and repeating the argument for Proposition \ref{resu1}.

\begin{proposition}\label{resu2}
Let $\omega:\RR^d\rightarrow \RR$ be a strongly convex function with modulus $\mu>0$. The BEP method with $\{\alpha_k, \beta_k\}$ satisfying \eqref{eq0} and initial points $u^*_0, u_{-1}=u_0$ being given, applied to the saddle point problem \eqref{sadd} with Assumption \ref{basic}, converges sublinearly in the sense that
\begin{equation}\label{bound2}
|f(\hat{x}_k, \hat{y}_k)-f(\bar{x},\bar{y})|\leq \max_{z:z\in \cD_2} \frac{1}{\sum_{i=0}^k \alpha_i}D_\omega^{u_0^*}(z,u_0).
\end{equation}
\end{proposition}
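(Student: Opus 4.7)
The plan is to mirror the proof of Proposition \ref{resu1}, with the only substantive change being that the per-step estimate \eqref{recur1} is replaced by the more elaborate per-step estimate \eqref{eq01} supplied by Proposition \ref{conEP}. Setting $z_k = u_{k+1}$ and $r_k = \alpha_k$, I would sum \eqref{eq01} over $k = 0, 1, \ldots, K$. Three things happen in the sum. First, the cross-difference $\alpha_k\langle F(u_{k+1}) - F(u_k), u_{k+1} - u\rangle - \alpha_{k-1}\langle F(u_k) - F(u_{k-1}), u_k - u\rangle$ telescopes; the $k=0$ contribution vanishes because the initial condition $u_{-1} = u_0$ forces $F(u_0) - F(u_{-1}) = 0$, leaving only the terminal piece $\alpha_K\langle F(u_{K+1}) - F(u_K), u_{K+1} - u\rangle$. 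Second, the generalized Bregman distances $D_\omega^{u_k^*}(u,u_k) - D_\omega^{u_{k+1}^*}(u,u_{k+1})$ collapse to $D_\omega^{u_0^*}(u,u_0) - D_\omega^{u_{K+1}^*}(u,u_{K+1})$. Third, the Bregman increments $\lambda\alpha_{k-1}D_\omega^{u_{k-1}^*}(u_k,u_{k-1}) - \lambda\alpha_k D_\omega^{u_k^*}(u_{k+1},u_k)$ collapse to $-\lambda\alpha_K D_\omega^{u_K^*}(u_{K+1},u_K)$, again using $u_{-1} = u_0$ to kill the starting term.

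The crux is controlling the residual terminal cross term. I would apply Cauchy--Schwarz together with the $L$-Lipschitzness of $F$ (available by Assumption \ref{basic}) and then Young's inequality to split the bound as $\tfrac{\alpha_K L}{2}\|u_{K+1} - u_K\|^2 + \tfrac{\alpha_K L}{2}\|u_{K+1} - u\|^2$. Each quadratic is then controlled by the corresponding Bregman distance via the strong-convexity bound \eqref{Bregdis3} together with the identification $\lambda = L/\mu$: the first quadratic is at most $\lambda\alpha_K D_\omega^{u_K^*}(u_{K+1},u_K)$ and therefore cancels the telescoped $-\lambda\alpha_K D_\omega^{u_K^*}(u_{K+1},u_K)$ exactly, while the second is at most $\lambda\alpha_K D_\omega^{u_{K+1}^*}(u,u_{K+1})$ and is absorbed into $D_\omega^{u_{K+1}^*}(u,u_{K+1})$ because $\lambda\alpha_k \leq 1 - \rho$ leaves a positive fraction $\rho$ to spare. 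What survives is the clean bound
$$
\sum_{k=0}^K \alpha_k \langle F(u_{k+1}), u_{k+1} - u\rangle \;\leq\; D_\omega^{u_0^*}(u, u_0),
$$
which is the exact BEP analog of the inequality produced in the BEG proof and plays the role of the author's \eqref{eqd3}.

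From here the argument reproduces Proposition \ref{resu1} verbatim. Divide by $s_K = \sum_{i=0}^K \alpha_i$, invoke the gap estimate \eqref{gap} of Lemma \ref{lempre} with $z_k = u_{k+1}$ and $r_k = \alpha_k$ to convert $\langle F(u_{k+1}), u_{k+1}-z\rangle$ into $f(\hat{x}_K, y) - f(x, \hat{y}_K)$, take the supremum over $z \in \cD_2$, and use the saddle point inequalities $f(\bar{x}, \hat{y}_K) \leq f(\bar{x},\bar{y}) \leq f(\hat{x}_K, \bar{y})$ to convert the bounded gap into the two-sided estimate \eqref{bound2}. The set $\cD_2$ is compact and, by construction, contains both $\bar{z}$ and the averaged iterates $\hat{z}_K$, so taking the supremum there is legitimate.

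The main obstacle is exactly the cancellation bookkeeping of the second paragraph. Since Young's inequality and \eqref{Bregdis3} must be invoked at equality strength for the constants to line up, any slack would leave a residual $\|u_{K+1}-u\|^2$ term with no home to telescope into. This is precisely where the quantitative step-size restriction $\lambda(\alpha_k + \alpha_{k-1}) \leq 1$ in \eqref{eq0}, strengthened to the strict $\lambda\alpha_k \leq 1 - \rho$ already required for boundedness in Proposition \ref{conEP}, is essential; the rest of the argument is pure repetition of the BEG case.
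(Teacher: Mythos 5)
Your proposal is correct and follows essentially the same route as the paper: sum \eqref{eq01}, telescope, control the surviving terminal cross term, arrive at the analogue of \eqref{eqd3}, and then repeat the Proposition \ref{resu1} argument verbatim. The one localized difference is how you bound $\alpha_K\langle F(u_{K+1})-F(u_K),u_{K+1}-u\rangle$: you go through Cauchy--Schwarz, the Euclidean $L$-Lipschitzness of $F$, Young's inequality, and \eqref{Bregdis3}, whereas the paper simply invokes the relative Lipschitzness of Assumption \ref{Lip} directly (inequality \eqref{eqd2}). In the saddle-point setting these coincide because $\lambda=L/\mu$ and your chain of inequalities is precisely the proof of Lemma \ref{resclip}; the paper's version is marginally more general in that it never needs the Euclidean Lipschitz constant, only the relative one. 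Two small corrections: the cancellation does not require the strict margin $\lambda\alpha_k\le 1-\rho$ --- the condition $\lambda\alpha_K\le 1$ from \eqref{eq0} already makes the leftover $(1-\lambda\alpha_K)D_\omega^{u_{K+1}^*}(u,u_{K+1})$ nonnegative so it can simply be dropped, exactly as in \eqref{eqd3}; the $\rho$-margin is needed elsewhere, namely in Proposition \ref{conEP} to establish boundedness of $\{u_k\}$, which is what guarantees $R_2<\infty$ and hence that the maximum over the compact set $\cD_2$ in \eqref{bound2} is finite. You should state explicitly that you are relying on that boundedness conclusion, since without it the right-hand side of \eqref{bound2} could be $+\infty$ and the statement would be vacuous.
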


The iterate complexities above show that the function value of the averaged iterates generated by BEG or BEP converges to the function value at any fixed saddle point of the problem \eqref{sadd}.

\section{Conclusion}\label{se6}
Partially inspired by the elegant results in \cite{2020Convergence}, we introduce two algorithmic frameworks with generalized Bregman distances and study their iteration complexity for solving saddle point problems. With the help of the recent concept of relative Lipschitzness and Bregman distance related tools, our approach is simple and essentially different from the proximal point approach taken in \cite{2020Convergence}. Moreover, our theory is general in the sense that it applies to more general algorithmic frameworks under weaker assumptions. The numerical performance of the sparse and regularized variants definitely deserve further study and we leave it as future work.

\section*{Acknowledgements}
This work is supported by the National Science Foundation of China (No.11971480), the Natural Science Fund of Hunan for Excellent Youth (No.2020JJ3038), and the Fund for NUDT Young Innovator Awards (No. 20190105).

\section*{Appendix: The missing proofs}

\noindent {\bf The proof of Proposition \ref{conEG}}:
From the formula $\bar{u}_k =\nabla \omega^*(u_k^*- \alpha_k F(u_k))$ in \eqref{alg12}, we have
$$u_k^*- \alpha_k F(u_k)\in \partial \omega(\bar{u}_k).$$
 Take $\bar{u}_k^*:=u_k^*- \alpha_k F(u_k)$; then using the fact of $u_k^*\in\partial \omega(u_k)$ and applying  \eqref{Bregdis1} in Lemma \ref{lemBreg}, we derive that
\begin{eqnarray}\label{est1}
\begin{array}{lll}
  \langle \alpha_k F(u_k), \bar{u}_k-u\rangle &=& -\langle u^*_k-\bar{u}_k^*, u-\bar{u}_k\rangle\\
&= &  D_\omega^{u_k^*}(u,u_k) - D_\omega^{\bar{u}_k^*}(u,\bar{u}_k) - D_\omega^{u_k^*}(\bar{u}_k,u_k).
\end{array}
\end{eqnarray}
Similarly, starting with $u_{k+1}^*= u_k^*- \alpha_k F(\bar{u}_k)$ in \eqref{alg12} and applying \eqref{Bregdis1} in Lemma \ref{lemBreg} again, we derive that
\begin{eqnarray}\label{est2}
\begin{array}{lll}
  \langle \alpha_k F(\bar{u}_k), u_{k+1}-u \rangle &=& -\langle u^*_k-u_{k+1}^*, u-u_{k+1}\rangle\\
&= & D_\omega^{u_k^*}(u,u_k) - D_\omega^{u_{k+1}^*}(u,u_{k+1}) - D_\omega^{u_k^*}(u_{k+1},u_k).
\end{array}
\end{eqnarray}
Substituting $u_{k+1}$ for $u$ in \eqref{est1}, we obtain
\begin{equation}\label{est3}
\langle \alpha_k F(u_k), \bar{u}_k-u_{k+1}\rangle=D_\omega^{u_k^*}(u_{k+1},u_k) - D_\omega^{\bar{u}_k^*}(u_{k+1},\bar{u}_k) - D_\omega^{u_k^*}(\bar{u}_k,u_k).
\end{equation}
Combining \eqref{est2} and \eqref{est3}, we have
\begin{eqnarray*}\label{est4}
\begin{array}{lll}
  \langle \alpha_k F(\bar{u}_k), \bar{u}_k-u \rangle &=&  D_\omega^{u_k^*}(u,u_k) - D_\omega^{u_{k+1}^*}(u,u_{k+1}) \\
& &+\alpha_k\langle F(\bar{u}_k)-F(u_k), \bar{u}_k-u_{k+1}\rangle -D_\omega^{u_k^*}(\bar{u}_k,u_k)-D_\omega^{\bar{u}_k^*}(u_{k+1},\bar{u}_k).
\end{array}
\end{eqnarray*}
Invoking the relative Lipschitz assumption and noting $\lambda\alpha_k\leq 1$, we obtain \eqref{recur1}.

Now, we turn to prove the boundedness. Summing up \eqref{recur1} from $k=0$ to $t-1$, we obtain
\begin{equation}\label{eqc1}
\sum_{k=0}^{t-1}\langle \alpha_k F(\bar{u}_k), \bar{u}_k-u \rangle \leq D_\omega^{u_0^*}(u,u_0) - D_\omega^{u_t^*}(u,u_t).
\end{equation}
Using the monotonicity of $F$ in Assumption \ref{mono} and the condition $F(\bar{u})=0$ for any $\bar{u}\in\cY$, we have that
$$\langle  F(\bar{u}_k), \bar{u}_k-\bar{u} \rangle= \langle  F(\bar{u}_k)-F(\bar{u}), \bar{u}_k-\bar{u}\rangle\geq 0.$$
This means that each term in the summand in \eqref{eqc1} with $u=\bar{u}$ is nonnegative and hence implies the following
\begin{equation}\label{eqc3}
D_\omega^{u_t^*}(\bar{u},u_t)\leq D_\omega^{u_0^*}(\bar{u},u_0).
\end{equation}
Note from \eqref{Bregdis2} in Lemma \eqref{lemBreg} that
$$D_\omega^{u_t^*}(\bar{u},u_t)= D_{\omega^*}^{\bar{u}}(u_t^*,\bar{u}^*)=\omega^*(u_t^*)-\omega^*(\bar{u}^*)-\langle \bar{u}, u_t^*-\bar{u}^*\rangle.$$
Thereby, we have
\begin{equation}\label{eqc2}
\omega^*(u_t^*)-\langle \bar{u}, u_t^*\rangle\leq D_\omega^{u_0^*}(\bar{u},u_0)+\omega^*(\bar{u}^*)-\langle \bar{u}, \bar{u}^*\rangle.
\end{equation}
This implies that for the fixed $\bar{u}$, $\{\omega^*(u_t^*)-\langle \bar{u}, u_t^*\rangle\}$ is bounded.
Together with Lemma \ref{coer1} and Assumption \ref{coer}, we conclude that $\{u_t^*\}$ is bounded. Finally, using the strong convexity of $\omega$ and the fact in Lemma \ref{scLip}, we deduce that
$$\|u_t-u_0\| =\|\nabla \omega^*(u_t^*)-\nabla \omega^*(u_t^*)\|\leq \frac{1}{\mu}\|u_t^*-u_0^*\|,$$
from which the boundedness of $\{u_t\}$ immediately follows. This completes the proof.

\bigskip

\noindent {\bf The proof of Proposition \ref{conEP}}:
First, recall from \eqref{alg22} that
$$u_k^*- u_{k+1}^*= \alpha_k F(u_k)+\alpha_k\beta_k(F(u_k)-F(u_{k-1})).$$
It follows that
\begin{equation}\label{eq1}
\langle u_k^*- u_{k+1}^*, u_{k+1}-u\rangle =\langle \alpha_k F(u_k)+\alpha_k\beta_k(F(u_k)-F(u_{k-1})), u_{k+1}-u\rangle.
\end{equation}
On the other hand, applying  \eqref{Bregdis1} in Lemma \ref{lemBreg}, we have
\begin{equation}\label{eq2}
\langle u_k^*- u_{k+1}^*, u_{k+1}-u\rangle =  D_\omega^{u_k^*}(u,u_k) - D_\omega^{u_{k+1}^*}(u,u_{k+1}) - D_\omega^{u_k^*}(u_{k+1},u_k).
\end{equation}
Thus, combining \eqref{eq1} and \eqref{eq2}, we obtain
\begin{equation}\label{eq3}
\langle \alpha_k F(u_k)+\alpha_k\beta_k(F(u_k)-F(u_{k-1})), u_{k+1}-u\rangle =  D_\omega^{u_k^*}(u,u_k) - D_\omega^{u_{k+1}^*}(u,u_{k+1}) - D_\omega^{u_k^*}(u_{k+1},u_k).
\end{equation}
Let $\Delta F_k: =F(u_k)-F(u_{k-1})$; then
\begin{eqnarray}\label{eq4}
\begin{array}{lll}
 &&\langle \alpha_k F(u_k)+ \alpha_k\beta_k(F(u_k)-F(u_{k-1})), u_{k+1}-u\rangle\\
&=&  \alpha_k\beta_k\langle \Delta F_k, u_{k+1}-u\rangle-\alpha_k\langle \Delta F_{k+1}, u_{k+1}-u\rangle+ \alpha_k\langle F(u_{k+1}), u_{k+1}-u\rangle\\
&=& \alpha_k\beta_k\langle \Delta F_k, u_k-u\rangle +\alpha_k\beta_k\langle \Delta F_k, u_{k+1}-u_k\rangle\\
&&-\alpha_k\langle \Delta F_{k+1}, u_{k+1}-u\rangle+ \alpha_k\langle F(u_{k+1}), u_{k+1}-u\rangle.
\end{array}
\end{eqnarray}
Inserting \eqref{eq3} into \eqref{eq4} and rearranging the terms, we have
\begin{eqnarray}\label{eq5}
\begin{array}{lll}
 &&\alpha_k\langle F(u_{k+1}), u_{k+1}-u\rangle \\
 &=&\alpha_k\langle \Delta F_{k+1}, u_{k+1}-u\rangle-\alpha_k\beta_k\langle \Delta F_k, u_k-u\rangle+\alpha_k\beta_k\langle \Delta F_k,u_k- u_{k+1}\rangle\\
 &&+D_\omega^{u_k^*}(u,u_k) - D_\omega^{u_{k+1}^*}(u,u_{k+1}) - D_\omega^{u_k^*}(u_{k+1},u_k).
\end{array}
\end{eqnarray}
Using the relative Lipschitzness assumption, we have that
\begin{eqnarray}\label{eq6}
\begin{array}{lll}
 \langle \Delta F_k,u_k- u_{k+1}\rangle&=&\langle F(u_k)-F(u_{k-1}) ,u_k- u_{k+1}\rangle\\
 &\leq &\lambda D_\omega^{u_k^*}(u_{k+1},u_k)+\lambda D_\omega^{u_{k-1}^*}(u_k,u_{k-1}).
\end{array}
\end{eqnarray}
Finally, combining \eqref{eq5} and \eqref{eq6} and noting \eqref{eq0}, we obtain \eqref{eq01}.

Now, we turn to prove the boundedness. Summing up \eqref{eq01} from $k=0$ to $t-1$, we obtain
\begin{eqnarray}\label{eqd1}
\begin{array}{lll}
 \sum_{k=0}^{t-1}\alpha_k\langle F(u_{k+1}), u_{k+1}-u\rangle&\leq & \alpha_{t-1}\langle \Delta F_t, u_t-u\rangle-\alpha_{-1}\langle \Delta F_0, u_0-u\rangle\\
 &&+D_\omega^{u_0^*}(u,u_0) - D_\omega^{u_t^*}(u,u_t)\\
 &&+\lambda\alpha_{-1}D_\omega^{u_{-1}^*}(u_0,u_{-1}) - \lambda\alpha_{t-1}D_\omega^{u_{t-1}^*}(u_t,u_{t-1}).
\end{array}
\end{eqnarray}
Using the relative Lipschitzness assumption, we have that
\begin{eqnarray}\label{eqd2}
\begin{array}{lll}
 \langle \Delta F_t,u_t- u\rangle&=&\langle F(u_t)-F(u_{t-1}) ,u_t- u\rangle\\
 &\leq &\lambda D_\omega^{u_t^*}(u,u_t)+\lambda D_\omega^{u_{t-1}^*}(u_t,u_{t-1}).
\end{array}
\end{eqnarray}
Combining \eqref{eqd1} and \eqref{eqd2} and noting $\|\Delta F_0\|=D_\omega^{u_0^*}(u,u_0)=0$ due to the fact $u_0=u_{-1}$, we obtain
\begin{equation}\label{eqd3}
\sum_{k=0}^{t-1}\alpha_k\langle F(u_{k+1}), u_{k+1}-u\rangle\leq D_\omega^{u_0^*}(u,u_0)-(1-\lambda \alpha_{t-1})D_\omega^{u_t^*}(u,u_t).
\end{equation}
Using \eqref{eqd3} with $u=\bar{u}\in \cY$ and repeating the argument below \eqref{eqc1}, we deduce that
\begin{equation}\label{eqd4}
(1-\lambda \alpha_{t-1})D_\omega^{u_t^*}(\bar{u},u_t)\leq D_\omega^{u_0^*}(\bar{u},u_0).
\end{equation}
Repeating the argument below \eqref{eqc3} and noting that $1-\lambda \alpha_{t-1}>\rho$, we finally conclude that the sequence $\{u_k\}$ is bounded.
This completes the proof.

\bigskip

\noindent {\bf The proof of Proposition \ref{resu1}}:
Combining \eqref{gap} in Lemma and \eqref{eqc1}, we obtain
\begin{equation}\label{eqe1}
 f(\hat{x}_k,y)-f(x, \hat{y}_k)\leq \frac{1}{s_k}\sum_{i=0}^k r_i\langle F(z_i), z_i-z\rangle\leq \frac{1}{s_k}D_\omega^{u_0^*}(z,u_0).
\end{equation}
In view of the definition of $\cD_1$, we derive that
\begin{eqnarray}\label{eqe2}
\begin{array}{lll}
& &\max_{y:[\hat{x}_k; y]\in \cD_1} f(\hat{x}_k, y)- \min_{x:[x;\hat{y}_k]\in \cD_1} f(x,\hat{y}_k)\\
 &= &\max_{z:z\in \cD_1} \left[f(\hat{x}_k, y)- f(x,\hat{y}_k)\right]\\
 &\leq &\max_{z:z\in \cD_1} \frac{1}{s_k}D_\omega^{u_0^*}(z,u_0).
\end{array}
\end{eqnarray}
Note that
$$\max_{y:[\hat{x}_k; y]\in \cD_1} f(\hat{x}_k, y)\geq f(\hat{x}_k, \bar{y})\geq f(\bar{x},\bar{y})$$
and
$$\min_{x:[x;\hat{y}_k]\in \cD_1} f(x,\hat{y}_k)\leq f(\bar{x},\hat{y}_k)\leq f(\bar{x},\bar{y})$$
due to the fact of $\bar{z}=[\bar{x};\bar{y}]\in\cD_1$ and the definition of saddle points. Thus, together with the fact of $[\hat{x}_k;\hat{y}_k]\in\cD_1$ and using \eqref{eqe2}, we derive that
$$f(\hat{x}_k, \hat{y}_k)-f(\bar{x},\bar{y})\leq \max_{y:[\hat{x}_k; y]\in \cD_1} f(\hat{x}_k, y) -\min_{x:[x;\hat{y}_k]\in \cD_1} f(x,\hat{y}_k)\leq \max_{z:z\in \cD_1} \frac{1}{s_k}D_\omega^{u_0^*}(z,u_0)$$
and
$$f(\bar{x},\bar{y})-f(\hat{x}_k, \hat{y}_k)\leq \max_{y:[\hat{x}_k; y]\in \cD_1} f(\hat{x}_k, y) -\min_{x:[x;\hat{y}_k]\in \cD_1} f(x,\hat{y}_k) \leq \max_{z:z\in \cD_1} \frac{1}{s_k}D_\omega^{u_0^*}(z,u_0).$$
Therefore, the sublinear convergence \eqref{bound} follows.

\end{document}